\documentclass[10pt,a4paper]{article}

\usepackage[utf8]{inputenc}
\usepackage[T1]{fontenc}
\usepackage{lmodern}

\usepackage{graphicx}
\makeatletter
\def\maxwidth{\ifdim\Gin@nat@width>\linewidth\linewidth\else\Gin@nat@width\fi}
\makeatother
\setkeys{Gin}{width=\maxwidth,keepaspectratio}

\usepackage[margin=2.5cm]{geometry}
\usepackage{setspace}
\usepackage{enumitem}
\setlist{itemsep=0.25\baselineskip, parsep=0pt, topsep=0.25\baselineskip, partopsep=0pt, leftmargin=*}

\usepackage{tocloft}
\usepackage{amsmath,amssymb,mathtools,amsthm}
\usepackage{longtable}

\swapnumbers

\theoremstyle{plain}
\newtheorem{theorem}{Theorem}
\newtheorem{lemma}[theorem]{Lemma}
\newtheorem{proposition}[theorem]{Proposition}

\theoremstyle{definition}
\newtheorem{definition}[theorem]{Definition}

\newtheorem{remark}[theorem]{Remark}

\usepackage[numbers,sort&compress]{natbib}

\usepackage{url}
\usepackage{hyperref}
\hypersetup{hidelinks}
\usepackage{authblk}
\usepackage{orcidlink}

\newcommand{\IR}{\mathbb{R}} % Real numbers #listed
\newcommand{\IN}{\mathbb{N}} % Natural numbers #listed
\newcommand{\del}{\partial} % Partial derivative #listed
\newcommand{\set}[1]{\{#1\}} % Simple set braces #listed
\newcommand{\Set}[2]{\left\{\, #1 \;\vert\; #2 \,\right\}} % Set-builder notation #listed
\newcommand{\ra}{\rightarrow} % Right arrow #listed
\newcommand{\tensor}{\otimes} % Tensor product #listed
\newcommand{\KL}{\mathcal{L}}

\newcommand{\qtext}[1]{\quad\text{#1}\quad} % Quad spaced text #listed
\newcommand{\ssum}[1]{\sum_{\substack{#1}}} % Sum with substacked condition #listed
\newcommand{\eps}{\varepsilon} % Epsilon variant #listed
\newcommand{\union}{\cup} % Union #listed
\newcommand{\tightlist}{\setlength{\itemsep}{0pt}\setlength{\parskip}{0pt}}
\renewcommand{\part}{\vdash} % Turnstile #listed (overrides LaTeX \part sectioning)
\newcommand{\opart}{\models} % Semantic entailment #listed
\DeclareMathOperator{\Pol}{Pol} % Polarization

\title{Faà di Bruno is Taylor Composition}
\author{Heinrich Hartmann\footnote{Hartmann IT GmbH / \href{https://heinrichhartmann.com/math}{heinrichhartmann.com/math} / \texttt{Heinrich@HeinrichHartmann.com}}\;\,\orcidlink{0000-0002-3929-2421}}
\date{}

\begin{document}

\maketitle
\vspace{-3em}\begin{center}\href{https://heinrichhartmann.com/math/2026-Faa-Di-Bruno}{heinrichhartmann.com/math/2026-Faa-Di-Bruno}\\[0.3em]\href{https://doi.org/10.48550/arXiv.2606.26133}{DOI} $\cdot$ \href{https://arxiv.org/abs/2606.26133}{arXiv} $\cdot$ \href{https://doi.org/10.5281/zenodo.18203201}{Zenodo}\end{center}\vspace{0.5em}

\begin{abstract}
We approach Faà di Bruno as a composition theorem for Taylor polynomials.
For $C^k$ maps $\phi: E \to F$ and $\psi: F \to G$ between Banach spaces,
let $T^k_\ast(\phi; x)$ denote the reduced Taylor polynomial of $\phi$ at $x$, obtained by removing the constant term.
We show that

$$T^k_\ast(\psi \circ \phi; x) = \pi_{\le k}\bigl(T^k_\ast(\psi; \phi(x)) \circ T^k_\ast(\phi; x)\bigr).$$

The proof is an elementary estimate of the Peano remainder and does not use partitions or combinatorial enumeration.

Expanding this composition identity recovers the classical Faà di Bruno formulas.
Polarization gives the multivariate partition formula (Lévy 2006), while coefficient extraction gives the multi-index formula (Constantine and Savits 1996).
Our approach separates the functorial nature of Taylor approximation from the combinatorial bookkeeping of polarization and coefficient extraction.

As an application, we give a general higher-order product rule.
\end{abstract}

\tableofcontents

\section{Introduction}\label{introduction}

Faà di Bruno formulas describe the higher-order behavior of derivatives
under composition. In one variable, the formula expresses the \(n\)-th
derivative of \(f \circ g\) as a universal polynomial in the derivatives
of \(f\) and \(g\). For \(f, g: \IR \to \IR\), this formula was first
derived in 1855 by Faà di Bruno \citep{FaaDiBruno1855}: \[
\begin{aligned}
\frac{d^n}{dx^n}\bigl(f\circ g\bigr)(x)
&=
\sum_{k=1}^n f^{(k)}(g(x))
\ssum{(\ast)}
\frac{n!}{m_1!\cdots m_n!}
\prod_{j=1}^n \Bigl(\frac{g^{(j)}(x)}{j!}\Bigr)^{m_j},
\end{aligned}
\] where \((\ast)\) runs over all tuples \(m_1,\dots,m_n\in\IN_0\) with
\(m_1+\dots+m_n=k\) and \(m_1+2m_2+\dots+nm_n=n\). The multivariate
generalization to maps \(\phi: \IR^d \to \IR^n\), \(f: \IR^n \to \IR\)
was established by Constantine and Savits \citep{CS1996}, who gave a
complete explicit formula for \(\del^\alpha(f \circ \phi)\) in full
multi-index form: \[
  \frac{\del^\alpha(f \circ \phi)(x)}{\alpha!}
  \;=\;
  \ssum{\beta \in \IN_0^n \\\\ 1\le|\beta|\le|\alpha|}
  \del^\beta f(y)\;
  \sum_{s\ge 1}\;
  \sum_{(\ast)}\;
  \prod_{r=1}^s
  \frac{1}{\beta_r!}
  \frac{\big( \del^{\alpha_r}\phi(x) \big)^{\beta_r}}
      {\alpha_r!^{|\beta_r|}},
\] where \((\ast)\) runs over tuples
\((\alpha_1,\dots,\alpha_s; \beta_1,\dots,\beta_s)\) with
\(\alpha_j \in \IN_0^d \setminus \set{0}\),
\(\beta_j \in \IN_0^n \setminus \set{0}\),
\(\alpha_1 < \cdots < \alpha_s\), subject to
\(\sum_r \alpha_r |\beta_r| = \alpha\) and \(\sum_r \beta_r = \beta\).

We give a functorial repackaging of these formulas, following the
philosophy that differentiable functions are functions well approximated
by polynomials, and the composition of differentiable functions should
be well approximated by the composition of their Taylor polynomials. Our
main result makes this precise:

\begin{theorem}[Taylor Composition \ref{thm:fdb-comp}]

Let \(E, F, G\) be Banach spaces, \(U_E \subset E\) and
\(U_F \subset F\) open. Let \(\phi \in C^k(U_E, F)\) and
\(\psi \in C^k(U_F, G)\) with \(y = \phi(x) \in U_F\). Then
\(\psi \circ \phi \in C^k(U_E, G)\) and the reduced Taylor polynomials
compose: \[
  T_{\ast}^k(\psi\circ\phi;\, x) = \pi_{\le k}\bigl(T_{\ast}^k(\psi;\, y) \circ T_{\ast}^k(\phi;\, x)\bigr).
\]

\end{theorem}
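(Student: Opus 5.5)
The strategy is to characterize the Taylor polynomial by its approximation property and then verify that the truncated composite has this property. I will use the converse direction of Taylor's theorem in the following form. Suppose $\chi \in C^k$ near $x$ and $P$ is a polynomial of degree $\le k$ with no constant term such that $\chi(x+h) - \chi(x) - P(h) = o(\|h\|^k)$. Then $P = T^k_\ast(\chi;x)$. This is the uniqueness of Peano expansions: two polynomials of degree $\le k$ that agree up to $o(\|h\|^k)$ must coincide. To see this, restrict to rays $h = t v$ and compare homogeneous components degree by degree. Each homogeneous component is then determined on all of $E$, which in turn determines the underlying symmetric multilinear map by polarization.

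Given this uniqueness, the proof splits into two parts.

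First, I show $\psi\circ\phi \in C^k$. This is the standard induction on $k$ using the chain rule $D(\psi\circ\phi)(x) = D\psi(\phi(x))\circ D\phi(x)$. The right-hand side is the composition of the $C^{k-1}$ maps $D\psi\circ\phi$ and $D\phi$, combined via the bounded bilinear composition map $L(F,G)\times L(E,F)\to L(E,G)$.

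Second, I set $P = T^k_\ast(\phi;x)$ and $Q = T^k_\ast(\psi;y)$. By the Peano remainder for $\phi$,
\[ \phi(x+h) = y + P(h) + r_\phi(h), \qquad r_\phi(h) = o(\|h\|^k), \]
and similarly
\[ \psi(y+u) = \psi(y) + Q(u) + r_\psi(u), \qquad r_\psi(u) = o(\|u\|^k). \]
Put $u(h) = P(h) + r_\phi(h)$. Since $P$ has no constant term, $\|u(h)\| = O(\|h\|)$. Then
\[ \psi(\phi(x+h)) - \psi(\phi(x)) = Q(P(h) + r_\phi(h)) + r_\psi(u(h)). \]
The last term is $o(\|u(h)\|^k) = o(\|h\|^k)$. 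Write $Q = \sum_{j=1}^k Q_j$ with $Q_j(u) = q_j(u,\dots,u)$ for bounded symmetric $j$-linear maps $q_j$. Expanding $Q_j(P(h)+r_\phi(h))$ by multilinearity gives $Q_j(P(h))$ plus mixed terms. Each mixed term contains at least one factor $r_\phi(h) = o(\|h\|^k)$, and its remaining factors are $O(\|h\|)$, so it is $o(\|h\|^k)$. Hence
\[ \psi(\phi(x+h)) - \psi(\phi(x)) = Q(P(h)) + o(\|h\|^k). \]
Finally, $Q\circ P$ is a polynomial in $h$, namely a sum of bounded homogeneous pieces. Because $P$ has no constant term, every piece has degree $\ge 1$. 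The pieces of degree $>k$ are $O(\|h\|^{k+1})$ for $\|h\|\le 1$, so they can be dropped. This gives $\pi_{\le k}(Q\circ P)(h)$ up to $o(\|h\|^k)$, and the uniqueness statement yields the theorem.

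The main obstacle is making the uniqueness statement rigorous in the Banach setting. One must show that a polynomial $R = \sum_{j=1}^k R_j$ with $R(h) = o(\|h\|^k)$ vanishes identically. I would fix $v$ and consider $t\mapsto R(tv) = \sum_j t^j R_j(v)$, a $G$-valued polynomial in $t$ that is $o(|t|^k)$. Dividing by $t$, then by $t^2$, and so on, and letting $t\to 0$, forces $R_1(v) = 0$, then $R_2(v) = 0$, and successively every $R_j(v) = 0$. Polarization then gives $R_j = 0$ as multilinear maps.

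Some care is also needed in checking that $Q\circ P$ decomposes into bounded symmetric homogeneous pieces, so that $\pi_{\le k}$ is well defined. This follows because composing bounded multilinear maps with bounded multilinear maps yields bounded multilinear maps.
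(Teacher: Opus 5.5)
Your proof is correct and follows essentially the same route as the paper: you write $\psi\circ\phi-\psi(y)$ as $Q(P)$ plus $r_\psi(u(h))$ (controlled via the linear bound on $u(h)=\phi(x+h)-y$) plus mixed multilinear terms with at least one $r_\phi$ factor, then discard $\pi_{>k}(Q\circ P)$ and conclude by uniqueness of Peano expansions, which you prove along rays. You additionally justify $\psi\circ\phi\in C^k$ by the standard chain-rule induction; the paper asserts this but does not prove it, and it is needed for $T^k_{\ast}(\psi\circ\phi;x)$ and its Peano remainder to exist before uniqueness can be applied.
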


The proof starts directly from the Peano remainder form of the Taylor
approximation and establishes the Peano bounds for the composition. This
is a straightforward argument that requires neither heavy combinatorics,
multi-index calculations, nor partition arguments.

The functoriality of Taylor approximation under composition is certainly
well known to experts and mentioned in passing, e.g.~Malgrange
(\citep{malgrange}, p.16). However, we are not aware of a self-contained
treatment in this generality. Hernández Encinas and Muñoz Masqué
\citep{EM2003} are close in spirit, but work in the finite-dimensional
smooth setting and employ coordinate algebra on truncated local rings,
instead of bounding the Taylor residual directly.

From this composition principle we derive independent proofs of both
combinatorial forms. The partition form (Theorem \ref{thm:fdb-part},
\citep{Levy2006}) is obtained by polarization, extracting symmetric
multilinear forms from homogeneous polynomials: \[
  D(\psi\circ\phi;\, x;\, u_1,\dots,u_k)
  =
  \sum_{\pi \part [k]}
  D\bigl(\psi;\, y;\, (D(\phi;\, x;\, u_B))_{B\in\pi}\bigr).
\] The multi-index form (Theorem \ref{thm:fdb-multi}, \citep{CS1996}) is
obtained by monomial coefficient extraction from the composed Taylor
polynomials, recovering the formula above.

In summary, this note gives a short, self-contained account in the
general Banach space setting that presents all three forms of Faà di
Bruno in their natural relationship: Taylor composition as the analytic
core, the partition formula as its polarization, and the multi-index
formula as its coefficient extraction. As an application we derive a
higher-order product rule in both partition and multi-index form.

\begin{remark}[Notation]

In the following we will use the following notational conventions. We
generally prefer explicit arguments over subscript notation, writing
e.g.~\(\Delta(f; x; v)\) instead of \(\Delta_v f(x)\). Semicolons and
commas are both argument separators; we use semicolons as a visual hint
when arguments are of different kinds, as in
\(D(\phi; x; v_1, \dots, v_k)\). Dropping trailing arguments denotes
currying: for a function \(f(x; y; z)\), we write \(f(x; y)\) for the
map \(z \mapsto f(x; y; z)\). Thus \(D(\phi; x; u_1, \dots, u_k)\)
denotes the \(k\)-th derivative evaluated on directions; dropping the
vector arguments, \(D(\phi; x)\) denotes the corresponding multilinear
map when \(k\) is clear from context. Similarly \(T_{\ast}^k(\phi; x)\)
is a polynomial map and \(T_{\ast}^k(\phi; x; v)\) is its evaluation at
\(v\).

\end{remark}

\section{Polynomials in Banach
Spaces}\label{polynomials-in-banach-spaces}

In this section we develop the basic theory of polynomials in the
setting of Banach spaces, as found e.g.~in Dineen \citep{Dineen1999}.
The reader who is only interested in the finite-dimensional setting can
safely skip this section.

Let \(E, F\) be Banach spaces. We write \(\KL(E, F)\) for the Banach
space of bounded linear maps \(E \to F\) with operator norm
\(\|A\| = \sup_{\|v\| \leq 1} \|A(v)\|\), and more generally
\(\KL(^k E, F)\) for the Banach space of bounded \(k\)-fold multilinear
maps \(E^{\times k} \to F\) with norm
\(\|A_k\| = \sup_{\|v_i\| \leq 1} \|A_k(v_1,\dots,v_k)\|\). We write
\(\KL^s(^k E, F) \subset \KL(^k E, F)\) for the subspace of symmetric
multilinear maps.

\begin{definition}[Polynomials]

A map \(p: E\to F\) is a \emph{\(k\)-homogeneous polynomial} if there
exists \(A \in \KL^s(^k E,F)\) such that \[
  p(x)=\frac{1}{k!}\,A[x,\dots,x]=\frac{1}{k!}\,A[x^{\tensor k}], \qquad x\in E.
\] We write \(\mathcal{P}^k(E,F)\) for the space of such maps. A
\emph{polynomial map} \(p: E \to F\) is a finite sum of homogeneous
polynomials \(p = \sum_{k=0}^n p_k\) with
\(p_k \in \mathcal{P}^k(E,F)\). We write \(\mathcal{P}(E,F)\) for the
space of all polynomial maps, \(\mathcal{P}_{\le n}(E,F)\) for those of
degree \(\le n\), and \(\mathcal{P}(E) := \mathcal{P}(E,\IR)\).

\end{definition}

\begin{definition}[Forward differences]

For \(f: E \to F\) and \(x, v \in E\), define the \emph{forward
difference} \[
  \Delta(f; x; v) := f(x + v) - f(x).
\] The \emph{iterated forward difference} is defined recursively:
\(\Delta(f; x) := f(x)\) for \(k = 0\), and \[
  \Delta(f; x; v_1, \dots, v_k) := \Delta(w \mapsto \Delta(f; x; v_2, \dots, v_k);\, x;\, v_1)
\] for \(k \geq 1\). Since the operators \(\Delta_{v_i}\) commute, this
is symmetric in \(v_1, \dots, v_k\). Expanding the recursion gives the
alternating sum over vertices of the \(k\)-cube: \[
  \Delta(f; x; v_1, \dots, v_k) = \sum_{S \subseteq [k]} (-1)^{k - |S|}\, f(x + \textstyle\sum_{i \in S} v_i).
\]

\end{definition}

\begin{lemma}[Polarization]

\label{lem:polarization} Let \(p: E \to F\) be a polynomial map,
represented as \(p(x) = \sum_{k=0}^n \frac{1}{k!} A_k[x^{\tensor k}]\)
with \(A_k \in \KL^s(^k E, F)\). Then the top-degree multi-linear form
is recovered as a forward difference: \[
  A_n[v_1,\dots,v_n] = \Delta(p; 0; v_1, \dots, v_n).
\] Furthermore, every polynomial map \(p\) has a unique representation
\(p(x) = \sum_{k=0}^n \frac{1}{k!} A_k[x^{\tensor k}]\) with
\(A_k \in \KL^s(^k E, F)\).

\end{lemma}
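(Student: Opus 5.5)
The plan is to compute the iterated forward difference of each homogeneous piece separately, using linearity of \(\Delta\) in \(f\). Then read off the top-degree form and deduce uniqueness by induction on the degree.

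\textbf{Step 1: a single homogeneous piece.} Fix \(q(x) = \frac{1}{m!}A_m[x^{\tensor m}]\) with \(A_m\) symmetric, and compute \(\Delta(q;0;v_1,\dots,v_n)\) from the cube formula
\[
\Delta(q;0;v_1,\dots,v_n)=\sum_{S\subseteq[n]}(-1)^{n-|S|}q\Bigl(\textstyle\sum_{i\in S}v_i\Bigr).
\]
Expanding \(A_m[(\sum_{i\in S}v_i)^{\tensor m}]\) by multilinearity gives a sum over words \((i_1,\dots,i_m)\in S^m\) of \(A_m[v_{i_1},\dots,v_{i_m}]\). Exchanging sums, the coefficient of a fixed word with letter set \(J\subseteq[n]\) is
\[
\sum_{S\supseteq J}(-1)^{n-|S|}.
\]
This equals \(0\) unless \(J=[n]\), and equals \(1\) when \(J=[n]\).

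\textbf{Step 2: read off the three cases.} If \(m<n\), no word of length \(m\) can use all \(n\) letters, so the difference vanishes. If \(m=n\), the surviving words are exactly the permutations of \([n]\). There are \(n!\) of them, and by symmetry each contributes \(A_n[v_1,\dots,v_n]\), so the result is \(\frac{1}{n!}\cdot n!\,A_n[v_1,\dots,v_n]\). The constant term \(m=0\) is covered by the first case when \(n\ge1\). Summing over \(m\le n\) by linearity yields the polarization identity.

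\textbf{Step 3: uniqueness by induction on \(n\).} Suppose \(\sum_{k\le n}\frac1{k!}A_k[x^{\tensor k}]=\sum_{k\le n}\frac1{k!}B_k[x^{\tensor k}]\) for all \(x\). Padding with zero forms, we may assume both sums have the same length. The difference is then the zero polynomial represented by \(C_k=A_k-B_k\), and the identity from Step 2 gives \(C_n[v_1,\dots,v_n]=\Delta(0;0;v_1,\dots,v_n)=0\). Dropping the top term and repeating on degree \(n-1\), down to \(k=0\), gives \(C_k=0\) for all \(k\).

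One subtlety is that "polynomial map" is defined as a finite sum of \(p_k\in\mathcal P^k\), and several summands might share the same degree. These can be merged by adding their symmetric forms, so every polynomial has a representation of the stated shape. The identity then refers to the top index \(n\) in any such representation. There is no analytic obstacle, since boundedness is never needed. The main point of care is the inclusion–exclusion count in Step 1, specifically the vanishing of \(\sum_{S\supseteq J}(-1)^{n-|S|}=(1-1)^{n-|J|}\) for \(J\neq[n]\).
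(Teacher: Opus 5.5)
Your proof is correct and follows the same route as the paper. Both arguments expand by multilinearity, kill every term whose index set is not all of $[n]$ via $\sum_{S\supseteq J}(-1)^{n-|S|}=(1-1)^{n-|J|}$, and get uniqueness by recovering the top form, subtracting, and inducting. Your grouping of words by their letter set is actually more careful than the paper's intermediate identity $p(\sum_{i\in I}v_i)=\sum_{J\subset I}A_{|J|}[v_J]$, which silently omits words with repeated indices (harmlessly, since the alternating sum cancels those terms anyway).
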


\begin{proof}

The polarization formula is Möbius inversion on the Boolean lattice
\(2^{[n]}\): By multilinearity,

\[
p(\sum_{i \in I} v_i) 
= \sum_{k=0}^n \sum_{i_1,\dots,i_k \in I}  \frac{1}{k!} A_k[v_{i_1},\dots,v_{i_k}] 
= \sum_{J \subset I} A_{|J|}[v_J]
\]

where \(A_k[v_J]\) denotes evaluation on the \(|J|\)-tuple indexed by
\(J\). The alternating sum \(\sum_{I \subset [n]} (-1)^{n-|I|}\) inverts
this: all terms with \(J \subsetneq [n]\) cancel by the identity
\(\sum_{I \supset J} (-1)^{n-|I|} = (1-1)^{n-|J|} = 0\), leaving only
\(J = [n]\). Uniqueness follows by induction: recover \(A_n\) via
polarization, subtract, and repeat.

\end{proof}

\begin{definition}[Degree Truncation]

Let \(p: E \ra F\) be a polynomial map. By Lemma \ref{lem:polarization}
the representation
\(p(x) = \sum_{k=0}^n  \frac{1}{k!} A_k[x^{\tensor k}]\) is unique. We
can hence define:

\begin{itemize}
\tightlist
\item
  \(\deg(p):= n \in \IN_0\) the degree of \(p\). By convention
  \(\deg(0) = - \infty\).
\item
  \(\pi_k(p) := p_k: E \ra F\) the degree-k part of \(p\).
\item
  \(\Pol_k(p) := A_k \in \KL^s(^k E, F)\) the degree-k polarization of
  \(p\).
\item
  \(\pi_{\leq k}(p) \in \mathcal{P}_{\le k}(E,F)\) the (lower) degree
  truncation of \(p\).
\item
  \(\pi_{> k}(p) \in \mathcal{P}_{> k}(E,F)\) the upper degree
  truncation of \(p\).
\end{itemize}

\end{definition}

We will make use of the following properties of polynomial maps:

\begin{lemma}[Polynomial Properties]

\label{lem:poly-props} Let \(p : E \ra F\), \(q: F \ra G\) be polynomial
maps.

\begin{enumerate}
\def\labelenumi{\arabic{enumi}.}
\tightlist
\item
  (Composition) The composition \(q \circ p: E \ra G\) is a polynomial
  with \(\deg(q \circ p) \leq \deg(q) \cdot \deg(p)\).
\item
  (Lipschitz) There exists \(\eps > 0, C>0\) such that
  \(\|\Delta(p; 0; x)\| \le C \|x\|\) for all \(\|x\| < \eps\).
\item
  (Upper Vanishing) We have \(\| \pi_{> k}(p)(x)\| / \|x\|^{k} \ra 0\)
  for \(x \ra 0\).
\item
  (Lower Vanishing) If \(\|p(x)\| / \|x\|^{k} \ra 0\) for \(x \ra 0\),
  and \(k \geq \deg(p)\) then \(p = 0\).
\end{enumerate}

\end{lemma}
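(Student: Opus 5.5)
We prove the four items separately, always working with the unique representation from Lemma \ref{lem:polarization}, $p(x) = \sum_{k=0}^n \frac{1}{k!} A_k[x^{\tensor k}]$, and the elementary bound $\|A_k[x^{\tensor k}]\| \le \|A_k\|\,\|x\|^k$, which holds by definition of the multilinear operator norm.

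For \emph{Composition}, we write $q(y) = \sum_{j=0}^m \frac{1}{j!} B_j[y^{\tensor j}]$ and substitute $y = p(x)$. By multilinearity, $B_j[p(x),\dots,p(x)]$ expands into a finite sum of terms $B_j[p_{k_1}(x),\dots,p_{k_j}(x)]$. Each such term is of the form $x \mapsto C[x^{\tensor (k_1+\dots+k_j)}]$ with
\[
C[v_1,\dots,v_N] := \tfrac{1}{k_1!\cdots k_j!} B_j\bigl[A_{k_1}[v_1,\dots,v_{k_1}],\dots\bigr],
\]
which is bounded multilinear, and $N = k_1+\dots+k_j \le m n$. A bounded multilinear map need not be symmetric, so we replace $C$ by its symmetrization $\frac{1}{N!}\sum_{\sigma} C\circ\sigma$. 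This is bounded, symmetric, and agrees with $C$ on the diagonal. Hence each term is an $N$-homogeneous polynomial, after absorbing the factor $N!$ into the symmetric form. Summing the terms gives a polynomial of degree $\le \deg q\cdot\deg p$. The degenerate cases where $p$ or $q$ is constant or zero are immediate.

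For \emph{Lipschitz}, we have $\Delta(p;0;x) = p(x)-p(0) = \sum_{k\ge1}\frac{1}{k!}A_k[x^{\tensor k}]$. For $\|x\|<1$, each $\|x\|^k \le \|x\|$ when $k\ge1$, so the bound holds with $C = \sum_{k\ge 1}\|A_k\|/k!$ and $\eps=1$.

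For \emph{Upper Vanishing}, we estimate
\[
\|\pi_{>k}(p)(x)\| \le \sum_{j>k}\frac{\|A_j\|}{j!}\|x\|^j .
\]
Dividing by $\|x\|^k$ leaves a finite sum of terms $\|x\|^{j-k}$ with $j-k\ge1$, and each of these tends to $0$.

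\emph{Lower Vanishing} is the only step with any subtlety, namely isolating the individual homogeneous parts. We argue by contradiction: suppose $p\ne0$, and let $j$ be the smallest index with $A_j \neq 0$, so $j\le \deg p\le k$. Choose $v$ with $A_j[v^{\tensor j}]\neq0$; such a $v$ exists because, by Lemma \ref{lem:polarization}, a symmetric form vanishing on the diagonal is zero. Restrict to the ray $x=tv$ with $t\to0^+$. Then $p(tv) = t^j\bigl(\frac{1}{j!}A_j[v^{\tensor j}] + O(t)\bigr)$, so $\|p(tv)\|/\|tv\|^k$ behaves like $c\, t^{j-k}$ with $c>0$. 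Since $j\le k$, this expression is bounded below by a positive constant for small $t$ (and it tends to infinity if $j<k$). It therefore does not tend to $0$, contradicting the hypothesis, so $p=0$.
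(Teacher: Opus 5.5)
Your proposal is correct and follows the paper's proof item by item: multilinear expansion of $q\circ p$ for Composition, the bound $\|x\|^j\le\|x\|$ on the unit ball for Lipschitz, termwise estimates $\|x\|^{j-k}\to 0$ for Upper Vanishing, and restriction to a ray $tv$ through the lowest nonvanishing homogeneous component for Lower Vanishing. Your explicit symmetrization of the composed multilinear form in item 1 is a small but welcome addition, since the paper's definition of a homogeneous polynomial requires a symmetric form and the paper's proof leaves this step implicit.
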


\begin{proof}

Ad 1) Expanding \(q(p(x))\) by multilinearity yields a sum of terms
\(B[A_1[x^{\tensor j_1}], \dots, A_\ell[x^{\tensor j_\ell}]]\) where
\(B \in \KL(^\ell F, G)\) and \(A_i \in \KL(^{j_i} E, F)\) are bounded
multilinear.

The composition
\((u_1,\dots,u_k) \mapsto B[A_1[u_{I_1}], \dots, A_\ell[u_{I_\ell}]]\)
is bounded multilinear, hence each term is a homogeneous polynomial of
degree \(j_1 + \cdots + j_\ell \leq \ell \cdot \deg(p)\). Thus
\(q \circ p\) is a polynomial with
\(\deg(q \circ p) \leq \deg(q) \cdot \deg(p)\).

Ad 2) Write
\(\Delta(p; 0; x) = p(x) - p(0) = \sum_{j=1}^m \frac{1}{j!} A_j[x^{\tensor j}]\)
with \(A_j \in \KL^s(^j E, F)\) bounded. For \(\|x\| < 1\) we have
\(\|x\|^j \leq \|x\|\), hence
\(\|\Delta(p; 0; x)\| \leq \sum_{j=1}^m \frac{\|A_j\|}{j!} \|x\|^j \leq C\|x\|\)
with \(C = \sum_{j=1}^m \frac{\|A_j\|}{j!}\).

Ad 3) Each homogeneous component \(p_j\) of degree \(j > k\) satisfies
\(\|p_j(x)\| \leq C_j \|x\|^j\), hence
\(\|p_j(x)\|/\|x\|^k = C_j \|x\|^{j-k} \to 0\) as \(x \to 0\).

Ad 4) If \(p \neq 0\), let
\(j_0 := \min\{j : p_j \neq 0\} \leq \deg(p) \leq k\) and choose
\(v \in E\) with \(p_{j_0}(v) \neq 0\). Then
\(p(tv) = t^{j_0} p_{j_0}(v) + O(t^{j_0+1})\). If \(j_0 < k\), then
\(\|p(tv)\|/t^k\) diverges as \(t \to 0^+\). If \(j_0 = k\), then
\(\|p(tv)\|/t^k \to \|p_{j_0}(v)\| \neq 0\). In either case
\(p = o(\|x\|^k)\) is impossible.

\end{proof}

\section{Fréchet Differentiability}\label{fruxe9chet-differentiability}

We summarize the classical notion of Fréchet differentiability following
\citep{LangRFA}.

\begin{definition}[Fréchet differentiability \citep{LangRFA} XIII.6]

Let \(E,F\) be Banach spaces, \(U_E \subset E\) an open subset and
\(\phi:U_E \ra F\) a map.

\begin{itemize}
\item
  (Fréchet differentiable) We say that \(\phi\) is Fréchet
  differentiable at a point \(x \in U_E\) if there exists a bounded
  linear map \(L \in \KL(E,F)\) such that \[
      \lim_{v\to 0} \frac{1}{\|v\|}\, \|\Delta(\phi; x; v) - L(v)\| = 0.
    \] The map \(L\) is uniquely determined and written \(D(\phi; x)\),
  so that \(D(\phi; x; v) = L(v)\).
\item
  (\(C^1\) maps) We say that \(\phi\) is continuously Fréchet
  differentiable (\(C^1\)) on \(U_E\) if \(D(\phi; x)\) exists for every
  \(x \in U_E\) and the map \(x \mapsto D(\phi; x) \in \KL(E,F)\) is
  continuous.
\item
  (\(C^k\) maps) Recursively, \(\phi\) is \(k\)-times continuously
  Fréchet differentiable (\(C^k\)) if \(\phi\) is \(C^1\) and
  \(x \mapsto D(\phi; x)\) is \(C^{k-1}\). We write \(C^k(U_E, F)\) for
  the space of such maps, and denote by \[
      D^k(\phi; x; v_1,\dots,v_k) := D(z \mapsto D^{k-1}(\phi; z; v_1,\dots,v_{k-1});\, x;\, v_k)
    \] the \(k\)-th Fréchet differential. This is a symmetric
  \(k\)-linear form: \(D^k(\phi; x) \in \KL^s(^k E, F)\). When the order
  is clear from context, we write \(D(\phi; x; v_1,\dots,v_k)\) for
  \(D^k(\phi; x; v_1,\dots,v_k)\).
\end{itemize}

\end{definition}

\begin{definition}[Taylor polynomials]

\label{def:taylor} Let \(\phi \in C^k(U_E, F)\) and \(x \in U_E\). The
\emph{Taylor polynomial} of \(\phi\) at \(x\) is \[
  T^k(\phi; x; v) := \sum_{\ell=0}^k \frac{1}{\ell!} D^\ell(\phi; x; v,\dots,v), \quad T^k(\phi; x) \in \mathcal{P}_{\le k}(E, F).
\] The \emph{reduced Taylor polynomial} is
\(T_{\ast}^k(\phi; x; v) := T^k(\phi; x; v) - \phi(x)\), so that
\(T_{\ast}^k(\phi; x; 0) = 0\).

\end{definition}

\begin{remark}

The full Taylor polynomial \(T^k\) is the correct object for studying
the commutative algebra of functions under pointwise multiplication. The
reduced Taylor polynomial \(T_{\ast}^k\) is the correct object for
studying functions under composition. We will focus on the reduced
version in what follows, returning to the full version for the Leibniz
rule (\ref{prop:leibniz}).

\end{remark}

\begin{proposition}[Taylor Approximation \citep{LangRFA}, XIII.6]

\label{prop:taylor} Let \(\phi \in C^k(U_E, F)\) and \(x \in U_E\).

\begin{enumerate}
\def\labelenumi{\arabic{enumi}.}
\item
  The Taylor remainder in Peano form
  \(R^k(\phi; x) := \Delta(\phi; x) - T_{\ast}^k(\phi; x)\) satisfies:
  \[
   \Delta(\phi; x; v) = T_{\ast}^k(\phi; x; v) + R^k(\phi; x; v)
   \] and \(\|R^k(\phi; x; v)\|/\|v\|^k \to 0\) as \(v \to 0\).
\item
  (Uniqueness) Let \(\Delta(\phi; x) = T + R\) be any other
  decomposition with \(T \in \mathcal{P}_{\leq k}(E,F)\), \(T(0) = 0\),
  and \(\|R(v)\|/\|v\|^k \ra 0\) as \(v \ra 0\). Then
  \(T = T_{\ast}^k(\phi; x)\) and \(R = R^k(\phi; x)\).
\end{enumerate}

\end{proposition}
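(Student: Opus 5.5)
The statement to prove is Proposition~\ref{prop:taylor}. Part~1 is the classical Peano form of Taylor's theorem, cited from Lang. Part~2 follows quickly from Part~1 together with Lemma~\ref{lem:poly-props}.

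For Part~1, I would argue by induction on \(k\). The case \(k=1\) is exactly the definition of Fréchet differentiability, since \(T_\ast^1(\phi;x;v)=D(\phi;x;v)\).

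For the inductive step, take \(\phi\in C^k\). Since \(\phi\) is \(C^1\), I would use the mean-value inequality along the segment \([x,x+v]\), which lies in \(U_E\) for small \(v\). Write
\[
R^k(\phi;x;v)=\int_0^1 \bigl(D(\phi;x+tv;v)-D(T^{k}(\phi;x);tv;v)\bigr)\,dt .
\]
Alternatively, one can avoid integrals and bound the increment of \(g(v):=R^k(\phi;x;v)\) through its derivative. The key observation is that the derivative of \(v\mapsto T^k_\ast(\phi;x;v)\) in direction \(w\) is
\[
\sum_{\ell=1}^{k}\frac{1}{(\ell-1)!}D^\ell(\phi;x;v,\dots,v,w),
\]
which is precisely \(T^{k-1}(D\phi;x;v)[w]\), the Taylor polynomial of the \(C^{k-1}\) map \(D\phi:U_E\to\KL(E,F)\). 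This uses the symmetry of \(D^\ell\).

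Hence
\[
D(g;v)=\Delta(D\phi;x;v)-T_\ast^{k-1}(D\phi;x;v)=R^{k-1}(D\phi;x;v),
\]
and by the induction hypothesis this is \(o(\|v\|^{k-1})\). The mean-value inequality, \(\|g(v)-g(0)\|\le\sup_{t\in[0,1]}\|D(g;tv)\|\,\|v\|\), together with \(g(0)=0\), then gives \(\|g(v)\|=o(\|v\|^{k})\).

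The main obstacle in Part~1 is the bookkeeping identifying \(D_v T^k_\ast(\phi;x)\) with \(T^{k-1}(D\phi;x)\) in the Banach setting. Fréchet differentiability of a polynomial map is routine, since each term is bounded multilinear. Consistency of \(D^\ell(D\phi)\) with \(D^{\ell+1}\phi\) follows from the recursive definition of \(D^k\).

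For Part~2, suppose \(\Delta(\phi;x)=T+R\) with the stated properties. Subtracting the decomposition from Part~1 gives
\[
p:=T-T_\ast^k(\phi;x)=R^k(\phi;x)-R .
\]
Here \(p\) is a polynomial of degree \(\le k\), and \(\|p(v)\|/\|v\|^k\to0\). Lower Vanishing (Lemma~\ref{lem:poly-props}.4) gives \(p=0\), so \(T=T^k_\ast(\phi;x)\), and hence \(R=R^k(\phi;x)\).
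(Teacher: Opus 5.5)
Your proposal is correct, and Part~2 is exactly the paper's argument: subtract the two decompositions and apply Lower Vanishing (Lemma~\ref{lem:poly-props}) to the difference.

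For Part~1 the paper gives no proof and only cites \citep{LangRFA}, XIII.6. There the Peano bound comes from Taylor's formula with integral remainder, \(R^k(\phi;x;v)=\int_0^1\frac{(1-t)^{k-1}}{(k-1)!}\bigl(D^k(\phi;x+tv)-D^k(\phi;x)\bigr)[v^{\tensor k}]\,dt\), together with continuity of \(D^k\phi\) at \(x\).

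Your route is genuinely different. You induct on \(k\), use that \(g=R^k(\phi;x)\) has derivative \(D(g;v)=R^{k-1}(D\phi;x;v)\), and finish with the mean-value inequality. The bookkeeping you flag does go through. With the paper's recursive definition, \(D^m(D\phi;x;v,\dots,v)[w]=D^{m+1}(\phi;x;w,v,\dots,v)\), because evaluation at \(w\) is bounded linear and commutes with \(D\). Symmetry of \(D^\ell(\phi;x)\) then supplies the factor \(\ell\). Make sure the induction hypothesis is quantified over all target Banach spaces, since you apply it to \(D\phi\) with values in \(\KL(E,F)\).

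Your approach buys a self-contained proof with no Banach-valued integration. Unwinding the induction also gives a slightly stronger statement: continuity of \(D^k\phi\) is never used. Only Fréchet differentiability of \(D^{k-1}\phi\) at the single point \(x\) is needed, plus existence of lower derivatives near \(x\) and symmetry. This fits the paper's remark on pointwise Taylor approximation.

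The integral route instead yields an explicit remainder and, in one stroke, the quantitative bound \(\|R^k(\phi;x;v)\|\le\frac{1}{k!}\sup_{t\in[0,1]}\|D^k(\phi;x+tv)-D^k(\phi;x)\|\,\|v\|^k\).
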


\begin{proof}

Part 1 is standard (\citep{LangRFA}, XIII.6). For the uniqueness (part
2), assume \(\Delta(\phi; x) = T + R = T' + R'\) are two such
decompositions. Then \(p = T-T' = R-R'\) is a polynomial of degree
\(\leq k\) with \(\|p(v)\|/\|v\|^k \ra 0\) for \(v \ra 0\). By Lemma
\ref{lem:poly-props} we must have \(p = 0\).

\end{proof}

\begin{lemma}[Lipschitz Continuity]

\label{lem:lipschitz} If \(\phi \in C^k(U_E, F)\) with \(k \geq 1\),
then \(\phi\) is locally Lipschitz continuous: for every \(x \in U_E\)
there exist \(C > 0, \eps > 0\) such that
\(\| \Delta(\phi; x; v) \| \leq C \|v\|\) for all \(\|v\| < \eps\).

\end{lemma}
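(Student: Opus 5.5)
The plan is to combine the Peano form of the first-order Taylor approximation, i.e.\ Fréchet differentiability at \(x\), with the Lipschitz bound for polynomials from Lemma~\ref{lem:poly-props}. Since \(k \ge 1\), \(\phi\) is in particular \(C^1\), hence Fréchet differentiable at \(x\). Applying Proposition~\ref{prop:taylor} with \(k=1\) gives the decomposition
\[
  \Delta(\phi; x; v) = D(\phi; x; v) + R^1(\phi; x; v), \qquad \|R^1(\phi; x; v)\|/\|v\| \to 0 \text{ as } v \to 0 .
\]
We then bound the two summands separately.

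For the linear term, \(D(\phi;x)\) is a bounded linear map, so \(\|D(\phi;x;v)\| \le \|D(\phi;x)\|\,\|v\|\) for all \(v\). Alternatively, one can regard \(D(\phi;x)\) as a polynomial with zero constant term and invoke part~2 of Lemma~\ref{lem:poly-props}.

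For the remainder, the Peano condition with tolerance \(1\) yields some \(\eps>0\) with \(\|v\|<\eps\), \(x+v\in U_E\), and \(\|R^1(\phi;x;v)\| \le \|v\|\). Here \(\eps\) can be shrunk if necessary so that the ball of radius \(\eps\) around \(x\) lies in the open set \(U_E\).

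Combining the two bounds with the triangle inequality gives
\[
\|\Delta(\phi;x;v)\| \le (\|D(\phi;x)\|+1)\,\|v\| \qquad \text{for all } \|v\|<\eps .
\]
So \(C = \|D(\phi;x)\|+1\) works; it is strictly positive, as the statement requires.

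There is no real obstacle in this argument. The only points needing care are that \(\eps\) keeps \(x+v\) inside \(U_E\), so that \(\Delta(\phi;x;v)\) is defined, and that the limit statement in Proposition~\ref{prop:taylor} is unpacked correctly into an explicit \(\eps\) for the fixed tolerance \(1\).
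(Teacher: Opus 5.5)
Your proof is correct and follows the same strategy as the paper: split $\Delta(\phi;x;v)$ into a Taylor polynomial plus a Peano remainder, then bound each piece linearly in $\|v\|$. The only difference is that you use the first-order expansion, so the polynomial part is just $D(\phi;x)$ and is bounded by its operator norm, whereas the paper uses the order-$k$ expansion and needs the polynomial Lipschitz bound of Lemma~\ref{lem:poly-props} together with $\|v\|^k \le \|v\|$ for $\|v\|<1$; your version is slightly more direct and also explicitly keeps $x+v$ inside $U_E$.
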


\begin{proof}

We may assume \(x = 0\). Write \(T_{\ast}^k(\phi; 0) = P\) and
\(R^k(\phi; 0) = R\). By Lemma \ref{lem:poly-props} there exist
\(\eps > 0\) and \(C_1 > 0\) such that \(\|P(v)\| \leq C_1\|v\|\) for
\(\|v\| < \eps\). Since \(R(v) = o(\|v\|^k)\) we have
\(\|R(v)\| \leq C_2\|v\|^k \leq C_2\|v\|\) for
\(\|v\| < \min(\eps, 1)\). Thus
\(\|\Delta(\phi; 0; v)\| \leq \|P(v)\| + \|R(v)\| \leq (C_1 + C_2)\|v\|\)
for \(\|v\|\) small.

\end{proof}

\section{Taylor Composition}\label{taylor-composition}

We now prove the main result: reduced Taylor polynomials compose, up to
degree truncation.

\begin{theorem}[Taylor Composition]

\label{thm:fdb-comp} Let \(E,F,G\) be Banach spaces. Let
\(U_E \subset E\) be open, \(x \in U_E\), and \(\phi \in C^k(U_E, F)\).
Let \(U_F \subset F\) be open with \(y = \phi(x) \in U_F\) and
\(\psi \in C^k(U_F, G)\).

Then \(\psi \circ \phi \in C^k(U_E, G)\) and the reduced Taylor
polynomials compose: \[
    T_{\ast}^k(\psi \circ \phi;\, x) = \pi_{\leq k}\bigl( T_{\ast}^k(\psi;\, y) \circ T_{\ast}^k(\phi;\, x) \bigr).
    \]

\end{theorem}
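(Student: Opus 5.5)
The plan is to combine the uniqueness part of Proposition~\ref{prop:taylor} with the standard chain rule for the regularity claim. That $\psi\circ\phi\in C^k$ is classical: it follows by induction on $k$ from the chain rule $D(\psi\circ\phi;x)=D(\psi;\phi(x))\circ D(\phi;x)$ and the continuity of composition $\KL(F,G)\times\KL(E,F)\to\KL(E,G)$ (\citep{LangRFA}, XIII). The identity itself will then follow if we show that
\[
Q := \pi_{\le k}\bigl(T_{\ast}^k(\psi; y)\circ T_{\ast}^k(\phi; x)\bigr)
\]
is a polynomial of degree $\le k$ with $Q(0)=0$, and that $\Delta(\psi\circ\phi;x;v)-Q(v)=o(\|v\|^k)$. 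Proposition~\ref{prop:taylor}(2) then identifies $Q$ with $T_{\ast}^k(\psi\circ\phi;x)$. The first two properties are immediate: $Q$ is a polynomial by Lemma~\ref{lem:poly-props}(1), and $Q(0)=0$ since both reduced polynomials vanish at $0$.

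Write $P=T_{\ast}^k(\phi;x)$, $R_\phi=R^k(\phi;x)$, $S=T_{\ast}^k(\psi;y)$, $R_\psi=R^k(\psi;y)$, and $w(v)=\Delta(\phi;x;v)=P(v)+R_\phi(v)$. Since $\phi(x+v)=y+w(v)$,
\[
\Delta(\psi\circ\phi;x;v)=\Delta(\psi;y;w(v)) = S(w(v)) + R_\psi(w(v)).
\]
The difference from $Q(v)$ then splits into three terms:
\[
\Delta(\psi\circ\phi;x;v)-Q(v) = R_\psi(w(v)) + \bigl[S(P(v)+R_\phi(v))-S(P(v))\bigr] + \pi_{>k}(S\circ P)(v).
\]
We show each term is $o(\|v\|^k)$.

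\textbf{Third term.} This is $o(\|v\|^k)$ by Lemma~\ref{lem:poly-props}(3).

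\textbf{First term.} By Lemma~\ref{lem:lipschitz}, $\|w(v)\|\le C\|v\|$ for small $v$. Given $\eps>0$, choose $\delta$ with $\|R_\psi(u)\|\le\eps\|u\|^k$ for $\|u\|<\delta$. For $v$ small enough that $C\|v\|<\delta$ we get $\|R_\psi(w(v))\|\le \eps C^k\|v\|^k$. The case $w(v)=0$ is harmless, since $R_\psi(0)=0$.

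\textbf{Second term.} This is the main obstacle, though it should be routine. Expand $S=\sum_{j=1}^k \frac1{j!}B_j[u^{\tensor j}]$ with $B_j$ symmetric. By multilinearity,
\[
B_j[(a+b)^{\tensor j}]-B_j[a^{\tensor j}] = \sum_{i=1}^{j}\binom{j}{i} B_j[a^{\tensor (j-i)},b^{\tensor i}],
\]
and each summand is bounded by $\|B_j\|\,\|a\|^{j-i}\|b\|^i$. With $a=P(v)$ and $b=R_\phi(v)$ we have $\|a\|\le C\|v\|$ by Lemma~\ref{lem:poly-props}(2), and $\|b\|=o(\|v\|^k)$, hence also $\|b\|\le\|v\|$ for small $v$. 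Every summand has $i\ge1$, so it is $O(\|v\|^{j-i})\cdot o(\|v\|^k)\cdot O(\|v\|^{i-1})=o(\|v\|^k)$. Summing over the finitely many $i,j$ shows the second term is $o(\|v\|^k)$, and uniqueness completes the proof.
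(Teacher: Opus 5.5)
Your proposal is correct and follows essentially the same route as the paper. You use the same three-term split of the error (the remainder of $\psi$ evaluated along $\phi$, the multilinear difference $S(P+R_\phi)-S(P)$, and the upper truncation $\pi_{>k}(S\circ P)$), bound each term by $o(\|v\|^k)$ with the same Lipschitz and vanishing lemmas, and conclude by uniqueness of the Peano decomposition. You are slightly more explicit than the paper in citing Proposition~\ref{prop:taylor}(2) and in justifying the $C^k$ regularity of $\psi\circ\phi$ via the classical chain rule, which the paper's proof takes for granted.
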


\begin{proof}

By translation, we may assume \(x=0\) and \(\phi(x) = y = 0\) as well as
\(\psi(y) = 0\) throughout the proof. In particular
\(\Delta(\phi; 0) = \phi\), \(\Delta(\psi; 0) = \psi\).

Choose Taylor decompositions \(\phi = P + R\) and \(\psi = Q + S\), with
polynomials \(P,Q\) of degree \(\le k\) and remainders \(R,S\) vanishing
to order \(k\) at \(0\). Now write
\(\psi\circ\phi = Q(\phi) + S(\phi)=\pi_{\le k}(Q\circ P) + E\), with:
\[
  E := Q(\phi) + S(\phi) - Q(P) + \pi_{> k}(Q\circ P).
\] We have to show that \(\|E(x)\| / \|x\|^k \to 0\) for \(x \to 0\).

Clearly \(\| \pi_{> k}(Q\circ P) \| / \|x\|^k \to 0\) for \(x \to 0\) by
Lemma \ref{lem:poly-props}.

To see that \(\|S(\phi(x))\|/\|x\|^k \to 0\) we argue as follows: By the
Lipschitz property of \(\phi\) (Lemma \ref{lem:lipschitz}), we find
\(\delta > 0, C > 0\) so that \(\|\phi(x)\| < C \|x\|\) for all
\(\|x\| < \delta\). Since \(S\) is a Taylor residual we have
\(\|S(y)\| < \eta(y) \|y\|^k\) with \(\eta(y) \to 0\) as \(y \to 0\).
Hence \(\|S(\phi(x))\|/\|x\|^k \leq \eta(\phi(x)) C^k \to 0\) as
\(x \to 0\).

For the term \(Q(P+R) - Q(P)\), we decompose
\(Q(y) = \sum_{\ell=0}^k Q_\ell(y) = \sum_\ell \frac{1}{\ell!} B_\ell[y^{\tensor \ell}]\)
where \(Q_\ell = \pi_\ell Q\) and \(B_\ell = \Pol_\ell Q\). Then \[
  Q_\ell(P+R) - Q_\ell(P)
  = \sum_{\substack{i+j=\ell\\ j\ge1}} \frac{1}{i!\,j!}\,B_\ell\big(P^{\tensor i}, R^{\tensor j}\big).
\]

It suffices to show that each summand
\(B_\ell\big(P^{\tensor i}, R^{\tensor j})\) is in \(o(\|x\|^k)\). To
this end we note that:

\begin{itemize}
\tightlist
\item
  \(\|B_\ell(P(x)^{\tensor i}, R(x)^{\tensor j})\| \leq \|B_\ell\| \cdot \|P(x)\|^i \|R(x)\|^j\)
  as \(B_\ell \in \KL^s(^\ell F,G)\) bounded.
\item
  \(\|P(x)\| \le C\|x\|\) for \(\|x\| < \delta\) as \(P\) is Lipschitz
  (Lemma \ref{lem:poly-props}) and \(P(0) = 0\).
\item
  \(\|R(x)\| < \eta(x) \|x\|^k\) with \(\eta(x) \ra 0\) for \(x \to 0\)
  as \(R\) is a Taylor remainder.
\end{itemize}

So that: \[
  \|B_\ell(P^{\tensor i}(x),R^{\tensor j}(x))\| / \|x\|^k
  \leq
  \|B_\ell\| \; C^i \; \eta(x)^j \; \|x\|^{i + (j - 1) k}
  \to 0 \qtext{as} x \to 0
\] For the last step, note that \(j \geq 1\) so the factor
\(\eta(x)^j \to 0\) as \(x \to 0\), and \(i + (j - 1) k \geq 0\) hence
\(\|x\|^{i + (j - 1) k}\) stays bounded (and even goes to zero for
\(j \geq 2\)).

\end{proof}

\begin{remark}[Pointwise Taylor approximation]

The assumptions of this theorem can be slightly weakened. The proof does
not use the existence of derivatives in a neighborhood or the continuity
of \(z \mapsto D^j(\phi; z)\). What is needed is only a decomposition
\(\Delta(\phi; x) = P + R\) with \(P \in \mathcal{P}_{\le k}(E,F)\),
\(P(0) = 0\), and \(\|R(v)\|/\|v\|^k \to 0\) as \(v \to 0\) (the Peano
remainder condition), and similarly for \(\psi\) at \(y\). The theorem
therefore applies to a broader class of functions that are well
approximated by a polynomial at a single point.

\end{remark}

\section{Partition Form Faà di
Bruno}\label{partition-form-fauxe0-di-bruno}

The partition form of Faà di Bruno is obtained by polarizing the Taylor
composition theorem: extracting the degree-\(k\) symmetric multilinear
part from both sides.

\begin{definition}

Let \(I\) be a finite set.

\begin{itemize}
\item
  An un-ordered (non-empty) partition \(\pi\) of \(I\) is a set
  \(\pi = \set{B_1,\dots,B_r}\) of subsets \(B_i \subset I\)
  (``blocks''), so that \(B_i\) are disjoint, \(I = \union_{i=1}^r B_i\)
  and \(B_i \neq \emptyset\). We write \(\pi \part I\) in this case and
  call \(|\pi| = r\) the rank of \(\pi\).
\item
  An \emph{ordered \(r\)-decomposition} of \(I\) is a tuple
  \((B_1,\dots,B_r)\) of subsets \(B_i \subset I\), so that \(B_i\) are
  disjoint and \(I = \union_{i=1}^r B_i\). We write
  \((B_1,\dots,B_r) \opart I\) in this case.

  Note that we allow empty blocks, so that this datum is equivalent to a
  map \(\sigma: I \ra [r]\) with \(B_i = \sigma^{-1}\set{i}\).
\item
  For a set \(X\), a tuple \(v = (v_i)_{i \in I} \in X^I\) and a block
  \(B \subset I\), we write \(v_B := (v_b)_{b \in B} \in X^B\) for the
  restricted tuple.
\end{itemize}

\end{definition}

\begin{theorem}[Multivariate Faà di Bruno - Partition Form, cf. \citep{Levy2006}]

\label{thm:fdb-part} Let \(x \in U_E \subset E\) and set
\(y := \phi(x) \in F\). Assume that \(\phi \in C^k(U_E, F)\) and
\(\psi \in C^k(U_F, G)\). For \(u_1,\dots,u_k\in E\) we have: \[
  D(\psi\circ\phi;\, x;\, u_1,\dots,u_k)
  =
  \sum_{\pi \part [k]}
  D\bigl(\psi;\, y;\, (D(\phi;\, x;\, u_B))_{B\in\pi}\bigr).
\] The order of the blocks is immaterial because higher derivatives are
symmetric.

\end{theorem}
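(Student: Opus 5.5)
We obtain the formula by polarizing both sides of Theorem~\ref{thm:fdb-comp}: extract the degree-$k$ part and then its symmetric $k$-linear form. Applying $\pi_k$ to the left side of Theorem~\ref{thm:fdb-comp} gives $\frac{1}{k!}D^k(\psi\circ\phi;x)[v^{\tensor k}]$. Hence, by the uniqueness in Lemma~\ref{lem:polarization},
\[
D^k(\psi\circ\phi;x)=\Pol_k\bigl(\pi_k(Q\circ P)\bigr),\qquad Q:=T_\ast^k(\psi;y),\quad P:=T_\ast^k(\phi;x).
\]
It therefore suffices to compute the degree-$k$ part of $Q\circ P$ and identify its polarization with the partition sum.

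\textbf{Expanding $Q\circ P$.} Write $P(v)=\sum_{j=1}^k \frac{1}{j!}A_j[v^{\tensor j}]$ with $A_j=D^j(\phi;x)$, and $Q(w)=\sum_{r=1}^k\frac{1}{r!}B_r[w^{\tensor r}]$ with $B_r=D^r(\psi;y)$. Expanding by multilinearity gives
\[
\pi_k(Q\circ P)(v)=\sum_{r=1}^k\frac{1}{r!}\sum_{\substack{j_1+\dots+j_r=k\\ j_i\ge1}}\frac{1}{j_1!\cdots j_r!}\,B_r\bigl[A_{j_1}[v^{\tensor j_1}],\dots,A_{j_r}[v^{\tensor j_r}]\bigr].
\]

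\textbf{Polarizing.} Define the candidate $k$-linear form
\[
M[u_1,\dots,u_k]:=\sum_{\pi\part[k]}B_{|\pi|}\bigl[(A_{|B|}[u_B])_{B\in\pi}\bigr].
\]
$M$ is bounded multilinear, as in Lemma~\ref{lem:poly-props}(1), and symmetric: a permutation of $[k]$ permutes the partitions, and the $A_j$, $B_r$ are symmetric, so block order and order within blocks do not matter. By the uniqueness part of Lemma~\ref{lem:polarization}, it is enough to check $\frac{1}{k!}M[v^{\tensor k}]=\pi_k(Q\circ P)(v)$.

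Set all $u_i=v$. Then every partition of rank $r$ with block sizes forming the multiset $\{j_1,\dots,j_r\}$ contributes the same term. Count via ordered $r$-decompositions with nonempty blocks of prescribed sizes $(j_1,\dots,j_r)$: there are $\frac{k!}{j_1!\cdots j_r!}$ of them. Each unordered partition corresponds to exactly $r!$ ordered decompositions, since the blocks are distinct sets. Summing over all ordered size-tuples $(j_1,\dots,j_r)$ with $\sum j_i=k$ and dividing by $r!$ therefore counts each partition exactly once. This gives
\[
M[v^{\tensor k}]=\sum_r\frac{1}{r!}\sum_{(j_i)}\frac{k!}{\prod j_i!}\,B_r\bigl[A_{j_1}[v^{\tensor j_1}],\dots\bigr]=k!\,\pi_k(Q\circ P)(v),
\]
as required.

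The only delicate step is this counting identity, which converts the sum over ordered size compositions with the weights $1/r!$ and $1/\prod j_i!$ into a sum over unordered set partitions. I would present it through the ordered $r$-decomposition notion defined above, as maps $\sigma:[k]\to[r]$ with nonempty fibers. Alternatively, one can avoid counting altogether: polarize directly with $\Delta(\cdot;0;u_1,\dots,u_k)$ from Lemma~\ref{lem:polarization}, expand $P(\sum_{i\in S}u_i)$ via multilinearity, and observe that only terms using each $u_i$ exactly once survive. Those terms are indexed by maps $\sigma:[k]\to[r]$, which again reduce to partitions after dividing by $r!$.
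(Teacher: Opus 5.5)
Your proof is correct and is essentially the paper's argument: you polarize Theorem~\ref{thm:fdb-comp}, expand $\pi_k(Q\circ P)$ by multilinearity, and use the count $k!/(j_1!\cdots j_r!)$ of ordered decompositions together with the $r!$-to-one passage to unordered partitions. The only difference is bookkeeping: the paper first polarizes each summand via Lemma~\ref{lem:pol-comp} into a sum over ordered decompositions and collapses reorderings afterwards, whereas you check the partition sum $M$ on the diagonal and invoke uniqueness of polarization once (one caveat on your forward-difference aside: apply $\Delta$ to $\pi_k(Q\circ P)$ rather than to $Q\circ P$, since terms of degree $>k$ would otherwise also survive).
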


\begin{proof}

Fix \(u_1,\dots,u_k\in E\). By translation, we may again assume \(x=0\)
and \(y=0\) throughout the proof. Fix Taylor decompositions
\(\phi = P + R\), \(\psi = Q + S\) with \(P,Q\) polynomial of degree
\(\le k\). By \ref{thm:fdb-comp} we have
\(T_{\ast}^k(\psi \circ \phi; 0) = \pi_{\leq k}(Q \circ P)\), and hence
\(D(\psi\circ\phi; 0) = \Pol_k (Q \circ P)\).

Write the polynomial maps in polarized form as \[
  P(x) = \sum_{m=1}^k \frac{1}{m!}\,P_m[x^{\tensor m}], \qquad
  Q(y) = \sum_{r=1}^k \frac{1}{r!}\,Q_r[y^{\tensor r}].
\]

Expanding \(Q_r\) by multilinearity and isolating the degree \(k\) term
gives:

\[
  \pi_k Q(P(x))
  = 
  \sum_{r=1}^k
  \sum_{\substack{m_1,\dots,m_r\ge1 \\\\ m_1+\cdots+m_r = k}}
  \frac{1}{r!} Q_r\bigl[\frac{1}{m_1!} P_{m_1}[x^{\tensor m_1}],\dots,\frac{1}{m_r!} P_{m_r}[x^{\tensor m_r}]\bigr].
\]

Applying \(\Pol_k\) on both sides using Lemma \ref{lem:pol-comp} below
gives:

\[
  D(\psi\circ\phi; 0; u_1,\dots,u_k)
  = 
  \sum_{r=1}^k
  \ssum{m_1,\dots,m_r\ge1 \\\\ m_1+\cdots+m_r = k}
  \ssum{(B_1,\dots,B_r) \opart [k] \\\\ |B_i| = m_i}
  \frac{1}{r!}
  Q_r\bigl[P_{m_1}[u_{B_1}],\dots,P_{m_r}[u_{B_r}]\bigr].
\]

Now observe that there are exactly \(r!\) re-orderings of each ordered
decomposition \((B_1,\dots,B_r)\), and these re-orderings yield the same
summand as \(Q_r\) is symmetric. Hence the sum over
\(m_i, B \opart [k]\) can be replaced by a single sum over un-ordered
partitions \(\pi \part [k]\) with given block sizes \(|B_i| = m_i\).
Furthermore we have \(Q_r[v_1,\dots,v_r] = D(\psi; 0; v_1,\dots,v_r)\)
and \(P_m[u_1,\dots,u_m] = D(\phi; 0; u_1,\dots,u_m)\) by definition.
Substituting these into the above formulas yields the claim.

\end{proof}

It remains to prove the composition polarization lemma used above.

\begin{lemma}[Composition Polarization]

\label{lem:pol-comp} If \(k = m_1 + \dots + m_r \in \IN_0\) and
\(A_i \in \KL^s(^{m_i} E,F)\), and \(B \in \KL^s(^r F,G)\). Let \[
H(x) = B[\frac{1}{m_1!}A_1[x^{\tensor m_1}], \dots, \frac{1}{m_r!}A_r[x^{\tensor m_r}]]
\] then: \[
\Pol_k(H)[u_1,\dots,u_k] = \ssum{(B_1,\dots,B_r) \opart [k], \\\\ |B_i| = m_i} B[A_1[u_{B_1}], \dots, A_r[u_{B_r}]].
\]

\end{lemma}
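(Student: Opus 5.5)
The plan is to identify an explicit symmetric $k$-linear form $M$ whose diagonal $\frac{1}{k!}M[x^{\tensor k}]$ equals $H(x)$. The uniqueness part of Lemma \ref{lem:polarization} then gives $\Pol_k(H)=M$.

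First, define
\[
\tilde M[u_1,\dots,u_k] := B\bigl[A_1[u_1,\dots,u_{m_1}],\, A_2[u_{m_1+1},\dots,u_{m_1+m_2}],\,\dots,\, A_r[u_{k-m_r+1},\dots,u_k]\bigr],
\]
which uses consecutive blocks. This is a bounded $k$-linear form, by the same argument as in Lemma \ref{lem:poly-props}(1), and $\tilde M[x^{\tensor k}] = B[A_1[x^{\tensor m_1}],\dots,A_r[x^{\tensor m_r}]]$. Thus
\[
H(x) = \frac{1}{m_1!\cdots m_r!}\,\tilde M[x^{\tensor k}].
\]
Next, symmetrize by setting $M_{\mathrm{sym}}[u_1,\dots,u_k] := \frac{1}{k!}\sum_{\sigma\in S_k}\tilde M[u_{\sigma(1)},\dots,u_{\sigma(k)}]$. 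This is symmetric, bounded, and has the same diagonal as $\tilde M$. Hence
\[
H(x) = \frac{1}{k!}\,\frac{k!}{m_1!\cdots m_r!}\,M_{\mathrm{sym}}[x^{\tensor k}],
\]
and by uniqueness
\[
\Pol_k(H) = \frac{1}{m_1!\cdots m_r!}\sum_{\sigma\in S_k}\tilde M\circ\sigma .
\]

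The remaining step is to count permutations. A permutation $\sigma$ determines an ordered decomposition $(B_1,\dots,B_r)\opart[k]$ with $|B_i|=m_i$: $B_i$ is the image under $\sigma$ of the $i$-th consecutive block of positions. Because each $A_i$ is symmetric, $\tilde M\circ\sigma$ depends only on this decomposition and equals $B[A_1[u_{B_1}],\dots,A_r[u_{B_r}]]$. Each decomposition arises from exactly $m_1!\cdots m_r!$ permutations, namely those that permute positions within each block. This factor cancels the prefactor and yields the claimed sum. Note that $B$'s symmetry is not needed here, since the blocks stay ordered. The degenerate case $k=0$ (with $r=0$, or with all $m_i=0$ if they are allowed) is trivially consistent, because all sets involved are empty or singletons.

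The only delicate point is the bookkeeping in this last step. One must check that the fibers of the map from $\sigma$ to $(B_1,\dots,B_r)$ all have size exactly $\prod m_i!$, and that the symmetry of the $A_i$ makes the summand constant on each fiber. As a sanity check, one can instead polarize directly via Lemma \ref{lem:polarization}. Expanding $\Delta(H;0;u_1,\dots,u_k)$ by multilinearity, with $x=\sum_{i\in S}u_i$, and applying the Möbius cancellation leaves only the terms in which each index of $[k]$ is used exactly once. These are ordered assignments of the indices to the slots of $A_1,\dots,A_r$, i.e.\ $k!$ slot-fillings. Grouping them by the decomposition gives $\prod m_i!$ fillings per decomposition, which again cancel the factorials.
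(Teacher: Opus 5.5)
Your proof is correct and is essentially the paper's argument. Both rest on the uniqueness part of Lemma \ref{lem:polarization} and on the count $k!/(m_1!\cdots m_r!)$ of ordered decompositions with prescribed block sizes; the paper verifies that the claimed sum is symmetric with diagonal $k!\,H(x)$ (asserting the symmetry without comment), whereas you obtain that sum as the symmetrization of the consecutive-block form via a fiber count over $S_k$, which makes the symmetry automatic.
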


\begin{proof}

Let \(G[u_1,\dots,u_k]\) be the claimed expression for
\(\Pol_k(H)[u_1,\dots,u_k]\). As both sides are symmetric, bounded
\(k\)-multilinear forms in \(u_1,\dots,u_k\) we only have to validate
that \(\frac{1}{k!} G[x^{\tensor k}] = H\). We find: \[
G[x^{\tensor k}] 
= \sum_{(B_1,\dots,B_r) \opart [k], \\\\ |B_i| = m_i} B[A_1[x^{\tensor m_1}], \dots, A_r[x^{\tensor m_r}]] 
= \frac{k!}{m_1! \dots m_r!} \cdot B[A_1[x^{\tensor m_1}], \dots, A_r[x^{\tensor m_r}]]
\] as there are \(k! / m_1! \dots m_r!\) ordered decompositions of
\([k]\) into subsets of sizes \(m_1,\dots,m_r\). Re-arranging the
factorials yields the claim.

\end{proof}

\section{Multi-index Faà di Bruno}\label{multi-index-fauxe0-di-bruno}

In coordinates, the Taylor composition theorem yields an explicit
formula for partial derivatives of a composition by monomial coefficient
extraction.

\begin{definition}[Multi-Indices]

\leavevmode

\begin{itemize}
\item
  Elements \(\nu = (\nu_1,\dots,\nu_d) \in \IN_0^d\) are called
  multi-indices of dimension \(d\). We write \(|\nu| = \sum_i \nu_i\)
  for the degree and \(\nu! := \nu_1! \cdots \nu_d!\) for the factorial.
\item
  If \(A\) is a commutative algebra and \(v \in A^d\) is a \(d\)-tuple,
  then we write: \(v^\nu = \prod_i v_i^{\nu_i} \in A\).
\item
  If \(E = \IR^d\), write \(v^1,\dots,v^d: E \ra \IR\) for the
  coordinate projections. The monomials \(v^\nu\), \(\nu \in \IN_0^d\),
  form a basis of \(\mathcal{P}(E)\).
\item
  If \(U \subset \IR^d\) is open and \(\phi \in C^k(U, F)\) with
  \(x \in U\). Let \(\nu \in \IN_0^d\) with \(k = |\nu|\). We define the
  partial derivative \(\del^\nu \phi (x) \in F\) via the basis expansion
  of
  \(T_{\ast}^k(\phi; x) \in \mathcal{P}(\IR^d,F) = \mathcal{P}(\IR^d) \tensor F\)
  as coefficient of \(v^\nu/\nu!\): \[
      T_{\ast}^k(\phi; x; v) = \sum_{1 \leq |\nu| \leq k} \frac{v^\nu}{\nu!} \; \del^\nu \phi(x).
    \]
\end{itemize}

\end{definition}

\begin{theorem}[Multivariate Faà di Bruno - Multi-index Form, cf. \citep{CS1996}]

\label{thm:fdb-multi} Let \(x \in U \subset \IR^d\) and set
\(y := \phi(x) \in \IR^n\). Assume that \(\phi \in C^k(U, \IR^n)\) and
\(f \in C^k(\IR^n, \IR)\).

For \(\alpha \in \IN_0^d\), \(\alpha \neq 0\) the partial derivatives of
the composition are given by:

\begin{align*}
  \frac{\del^\alpha (f \circ \phi) (x)}{\alpha!}
=
  \sum_{\beta\in\IN_0^n}
  \del^\beta f(y)\,
  \sum_{\rho\,(\ast)}
  \Bigl(
    \prod_{\gamma \in \IN_0^d}
    \frac{1}{\rho(\gamma)!}
    \frac{(\del^\gamma \phi(x))^{\rho(\gamma)}}{(\gamma!)^{|\rho(\gamma)|}}
  \Bigr)
\end{align*}

where \((\ast)\) runs over all finitely supported maps
\(\rho:\IN_0^d\setminus\set{0} \ra \IN_0^n\) satisfying
\(\sum_\gamma \rho(\gamma) = \beta\) and
\(\sum_\gamma \gamma|\rho(\gamma)| = \alpha\).

\end{theorem}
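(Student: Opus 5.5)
We derive the formula from Theorem~\ref{thm:fdb-comp} by extracting the coefficient of a monomial. Take $k := |\alpha|$. Since $C^{k'} \supset C^{k}$ for $k' \le k$, nothing is lost by working at exactly this order. By translation we may assume $x = 0$ and $y = 0$. The definition of partial derivatives then gives the two reduced Taylor polynomials as
\[
  P(v) := T_{\ast}^k(\phi; 0; v) = \sum_{1 \le |\gamma| \le k} \frac{v^\gamma}{\gamma!}\, \del^\gamma\phi(0) \in \IR^n,
  \qquad
  Q(w) := T_{\ast}^k(f; 0; w) = \sum_{1 \le |\beta| \le k} \frac{w^\beta}{\beta!}\, \del^\beta f(0).
\]
By Theorem~\ref{thm:fdb-comp}, $T_{\ast}^k(f\circ\phi; 0) = \pi_{\le k}(Q \circ P)$. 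Because $|\alpha| = k$, the coefficient of $v^\alpha$ is unaffected by the truncation $\pi_{\le k}$. Hence $\del^\alpha(f\circ\phi)(0)/\alpha!$ equals the coefficient of $v^\alpha$ in $Q(P(v))$, read in the monomial basis of $\mathcal{P}(\IR^d)$.

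The next step is to expand $Q(P(v))$. Write $P = (P^1,\dots,P^n)$ with $P^i(v) = \sum_\gamma c^i_\gamma v^\gamma$, where $c_\gamma := \del^\gamma\phi(0)/\gamma! \in \IR^n$. Then $P(v)^\beta = \prod_i (P^i(v))^{\beta_i}$. Expand each power $(P^i)^{\beta_i}$ with the multinomial theorem over maps $\gamma \mapsto \rho_i(\gamma) \in \IN_0$ with $\sum_\gamma \rho_i(\gamma) = \beta_i$:
\[
  (P^i)^{\beta_i} = \sum_{\rho_i} \beta_i! \prod_\gamma \frac{(c^i_\gamma)^{\rho_i(\gamma)}}{\rho_i(\gamma)!}\, v^{\gamma \rho_i(\gamma)}.
\]
Bundling the $\rho_i$ into a vector-valued map $\rho(\gamma) := (\rho_1(\gamma),\dots,\rho_n(\gamma)) \in \IN_0^n$ gives
\[
  \frac{P(v)^\beta}{\beta!} = \sum_{\rho:\ \sum_\gamma \rho(\gamma) = \beta}\ \prod_\gamma \frac{c_\gamma^{\rho(\gamma)}}{\rho(\gamma)!}\, v^{\sum_\gamma \gamma|\rho(\gamma)|}.
\]
Here we use the multi-index conventions $\rho(\gamma)! = \prod_i \rho_i(\gamma)!$ and $c_\gamma^{\rho(\gamma)} = \prod_i (c^i_\gamma)^{\rho_i(\gamma)}$. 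Since $c_\gamma^{\rho(\gamma)} = (\del^\gamma\phi(0))^{\rho(\gamma)}/(\gamma!)^{|\rho(\gamma)|}$, the factor $(\gamma!)^{-|\rho(\gamma)|}$ in the statement comes out exactly.

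Now collect the coefficient of $v^\alpha$. It picks out those $\rho$ with $\sum_\gamma \gamma|\rho(\gamma)| = \alpha$, each multiplied by $\del^\beta f(0)$. Three points about the index ranges need checking.
\begin{itemize}
\item Since $P$ contains only terms with $1 \le |\gamma| \le k$, the sum runs over $\gamma \neq 0$. Any $\gamma$ with $|\gamma| > k$ cannot appear under the constraint $\sum_\gamma \gamma|\rho(\gamma)| = \alpha$ with $|\alpha| = k$, so extending the sum to all of $\IN_0^d \setminus \set{0}$ is harmless.
\item The sum over $\beta$ can likewise be extended to all of $\IN_0^n$. For $\beta = 0$ only $\rho = 0$ occurs, and it contributes to $v^0$, not to $v^\alpha$ with $\alpha \neq 0$. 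For $|\beta| > k$ there are no admissible $\rho$, since $|\beta| = \sum_\gamma |\rho(\gamma)| \le \sum_\gamma |\gamma|\,|\rho(\gamma)| = k$.
\item Each monomial arises with exactly the claimed coefficient. This holds because the monomials form a basis, so coefficients of $Q \circ P$ are well defined.
\end{itemize}

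The main obstacle is this multinomial bookkeeping: applying the multinomial theorem coordinatewise and regrouping the resulting $n$-tuple of maps into one map $\rho$ with values in $\IN_0^n$. The remaining delicate point is making sure the $\beta!$ from $Q$ cancels against the multinomial coefficients, leaving only $\prod_\gamma 1/\rho(\gamma)!$. Everything analytic is already contained in Theorem~\ref{thm:fdb-comp}.
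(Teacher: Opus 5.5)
Your proposal is correct and follows the paper's proof essentially step by step. Both apply Theorem~\ref{thm:fdb-comp}, expand $(w\circ P)^\beta$ coordinatewise by the multinomial theorem, bundle the $\rho_i$ into one $\IN_0^n$-valued map $\rho$ so that the $\beta!$ from the Taylor coefficients of $f$ cancels, and read off the coefficient of $v^\alpha$. Your fixing of $k=|\alpha|$ and your explicit index-range checks ($\gamma\neq 0$ from the start, $\beta=0$, and $|\beta|>|\alpha|$) are only cosmetic refinements of the paper's closing remarks that $\rho(0)=0$ and $1\le|\beta|\le|\alpha|$.
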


This is equivalent to the tuple form stated in the introduction:
enumerating the support of \(\rho\) in any total order on \(\IN_0^d\)
recovers the Constantine--Savits formula.

\begin{proof}

We may assume that \(x = y = f(y) = 0\), without loss of generality.

Step 1: Taylor Expansion) Write \(v^1,\dots,v^d\) for the coordinate
functions on \(\IR^d\) and \(w^1,\dots,w^n\) for the coordinates on
\(\IR^n\). Expand the Taylor polynomials of \(\phi\) and \(f\) as \[
  P(v) = T_{\ast}^k(\phi; 0; v) = \ssum{\alpha \in \IN_0^d \\\\ 1 \leq |\alpha| \leq k} \frac{v^\alpha}{\alpha!} \; p_\alpha,
  \qquad
  F(w) = T_{\ast}^k(f; 0; w) = \ssum{\beta \in \IN_0^n \\\\ 1\leq |\beta| \leq k} \frac{w^\beta}{\beta!} \; f_\beta,
\] with \(p_\alpha = \del^\alpha \phi(0) \in \IR^n\) and
\(f_\beta = \del^\beta f(0) \in \IR\).

Step 2: Polynomial Composition) By Theorem \ref{thm:fdb-comp} we have
\(\del^\nu (f \circ \phi)(0)/\nu! = [v^\nu](F \circ P)\). We compute
\([v^\nu](F \circ P)\) as follows: Let \(m \geq 0\). For each coordinate
\(w^i\) we have:

\[
(w^i\circ P)^{m}
=
  \Bigl(\sum_{\alpha \in \IN_0^d}
  \frac{p_\alpha^i}{\alpha!}\; v^\alpha\Bigr)^{m}
=
  \sum_{\alpha_1,\dots,\alpha_m \in \IN_0^d}
  \prod_{j=1}^m
  \frac{p_{\alpha_j}^i}{\alpha_j!} v^{\alpha_j}
=
  \ssum{\rho_i:\IN_0^d \ra \IN_0 \\\\ \sum_\alpha \rho_i(\alpha) = m}
  \Bigl(\frac{m!}{\prod_{\alpha \in \IN_0^d} \rho_i(\alpha)!} \Bigr)
  \prod_{\alpha\in \IN_0^d}
  \Bigl(\frac{p_\alpha^i}{\alpha!}\,v^\alpha\Bigr)^{\rho_i(\alpha)}.
\]

Note that there are only finitely many maps \(\rho:\IN_0^d \ra \IN_0\)
with \(\sum_\alpha \rho(\alpha) = n\), and that each of those maps is
finitely supported i.e.~\(\rho(\alpha) = 0\) except for finitely many
\(\alpha \in \IN_0^{d}\). In the second step the map \(\rho_i\) counts
the number of occurrences of \(\alpha\) in a given tuple
\((\alpha_1,\dots,\alpha_m)\),
i.e.~\(\rho_i(\alpha) = \# \Set{j}{\alpha_j = \alpha}\). For each given
map \(\rho_i\) we have \(m!/\prod_{\alpha \in \IN_0^d} \rho_i(\alpha)!\)
tuples which give the same map.

Now let \(\beta \in \IN_0^n\) and apply this formula to each
\(m = \beta_i \in \IN_0\). Write \(\rho: \IN_0^d \ra \IN_0^n\),
\(\rho(\alpha) = (\rho_1(\alpha),\dots,\rho_n(\alpha))\), so that:

\begin{align*}
  (w \circ P)^\beta
&=
  \prod_{i=1}^n (w^i \circ P)^{\beta_i}
=
  \ssum{\rho:\IN_0^d \ra \IN_0^n\\\\ \sum_\alpha \rho(\alpha) = \beta}
  \Bigl(
    \prod_{i=1}^n
    \frac{\beta_i!}{\prod_\alpha \rho_i(\alpha)!}
  \Bigr)
  \prod_{i=1}^n
  \prod_{\alpha \in \IN_0^d}
  \Bigl(
    \frac{p_\alpha^i}{\alpha!}\,v^\alpha
  \Bigr)^{\rho_i(\alpha)}
  \\\\
&=
  \beta! \;
  \ssum{\rho:\IN_0^d \ra \IN_0^n \\\\ \sum_\alpha \rho(\alpha) = \beta}
  \Bigl(
    \prod_{\alpha \in \IN_0^d}
    \frac{1}{\rho(\alpha)!}
    \frac{(p_\alpha)^{\rho(\alpha)}}{(\alpha!)^{|\rho(\alpha)|}}
  \Bigr) \;
  v^{\sum_\alpha \alpha|\rho(\alpha)|}.
\end{align*}

Substituting into \(F(P(v))\) and extracting the \(v^\alpha\)
coefficient gives:

\begin{align*}
  \frac{\del^\alpha (f \circ \phi) (0)}{\alpha!}
= 
  [v^\alpha]\, (F \circ P)
=
  [v^\alpha]\,
  \sum_{\beta\in\IN_0^n}
  \frac{f_\beta}{\beta!}\,(w \circ P)^\beta
=
  \sum_{\beta\in\IN_0^n}
  f_\beta\,
  \sum_{\rho\,(*)}
  \Bigl(
    \prod_{\gamma \in \IN_0^d}
    \frac{1}{\rho(\gamma)!}
    \frac{(p_\gamma)^{\rho(\gamma)}}{(\gamma!)^{|\rho(\gamma)|}}
  \Bigr)
\end{align*}

Here \((\ast)\) runs over all maps \(\rho:\IN_0^d \ra \IN_0^n\)
satisfying \(\sum_\gamma \rho(\gamma) = \beta\) and
\(\sum_\gamma \gamma|\rho(\gamma)| = \alpha\). Since \(P(0) = 0\), only
maps with \(\rho(0) = 0\) contribute, so we may restrict to
\(\rho:\IN_0^d\setminus\set{0} \ra \IN_0^n\). The constraint
\(\sum_\gamma \gamma|\rho(\gamma)| = \alpha\) with \(|\gamma| \geq 1\)
forces \(1 \leq |\beta| \leq |\alpha|\).

\end{proof}

\begin{remark}[Factorials and Symmetry]

Controlling the complexity of combinatorial expressions is a key
technical difficulty when working with Faà di Bruno. An important
insight in this presentation is that factorials are always associated
with symmetrization: passing from multilinear maps to polynomials via
diagonal evaluation, or quotienting out ordering information (e.g., from
ordered decompositions to unordered partitions). Formulas are often
simpler when symmetry is not yet imposed. We deliberately keep each
factorial adjacent to the corresponding symmetrized quantity to make the
combinatorics transparent.

\end{remark}

\section{Product Rule}\label{product-rule}

As an application of the Taylor composition theorem, we derive a
higher-order product rule for pointwise multiplication.

\begin{proposition}[Leibniz rule]

\label{prop:leibniz} Let \(E\) be a Banach space and let
\(f_1,\dots,f_n:E\to\IR\) be \(k\)-times Fréchet differentiable at a
point \(x \in E\). Recall the (unital) Taylor polynomial
\(T^k(f; x; v) = f(x) + T_{\ast}^k(f; x; v)\) from \ref{def:taylor}.
Then:

\begin{enumerate}
\def\labelenumi{\arabic{enumi}.}
\item
  (Taylor form). \[
     T^k(f_1 \cdots f_n;\, x;\, v) = \pi_{\leq k}\bigl(T^k(f_1;\, x;\, v) \cdots T^k(f_n;\, x;\, v)\bigr).
   \]
\item
  (Ordered-decomposition form.) For \(u_1,\dots,u_k\in E\), \[
  D(f_1 \cdots f_n;\, x;\, u_1,\dots,u_k)
  =
  \ssum{(B_1,\dots,B_n) \opart [k]}\ D(f_1;\, x;\, u_{B_1}) \cdots D(f_n;\, x;\, u_{B_n})
  \] where we use the convention \(D(f_i; x; u_\emptyset) = f_i(x)\) for
  the empty block.
\item
  (Multi-index form.) If \(E=\IR^d\) and \(\nu\in\IN_0^d\) with
  \(|\nu|=k\), then \[
  \frac{\del^\nu}{\nu!}(f_1 \cdots f_n)(x)
  =
  \ssum{\nu_1,\dots,\nu_n \in \IN_0^d \\\\ \nu_1 + \dots + \nu_n = \nu}\ \frac{\del^{\nu_1}f_1(x)}{\nu_1!} \cdots \frac{\del^{\nu_n}f_n(x)}{\nu_n!}
  \] where the sum ranges over all tuples
  \((\nu_1,\dots,\nu_n)\in(\IN_0^d)^n\) with \(\nu_i \in \IN_0^d\) with
  \(\nu_1+\cdots+\nu_n=\nu\).
\end{enumerate}

\end{proposition}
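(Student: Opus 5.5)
The plan is to derive all three parts from Theorem \ref{thm:fdb-comp} by viewing the product as a composition. Let \(\mu:\IR^n\to\IR\), \(\mu(y_1,\dots,y_n)=y_1\cdots y_n\), and \(\Phi=(f_1,\dots,f_n):E\to\IR^n\). Then \(f_1\cdots f_n=\mu\circ\Phi\). Theorem \ref{thm:fdb-comp} is stated for \(C^k\) maps, but by the Remark on pointwise Taylor approximation its proof only needs Peano decompositions at the point. So it applies here as long as each \(f_i\) has a Peano decomposition at \(x\) of order \(k\); I take that to be the meaning of ``\(k\)-times Fréchet differentiable at \(x\)''.

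For the Taylor form, I first record the reduced Taylor polynomial of \(\Phi\): it is \(T^k_\ast(\Phi;x)=(T^k_\ast(f_1;x),\dots,T^k_\ast(f_n;x))\). The Peano bound holds componentwise, and uniqueness comes from Proposition \ref{prop:taylor}. Next, \(\mu\) is a polynomial of degree \(n\). Setting \(c=\Phi(x)\), we have exactly
\[\Delta(\mu;c;w)=\prod_i(c_i+w_i)-\prod_i c_i .\]
This is a polynomial vanishing at \(0\) with zero remainder, so \(T^k_\ast(\mu;c)=\pi_{\le k}\Delta(\mu;c)\), with the terms of degree \(>k\) absorbed into the remainder by Lemma \ref{lem:poly-props}(3). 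Plugging in gives
\[T^k_\ast(f_1\cdots f_n;x)=\pi_{\le k}\bigl(\pi_{\le k}\Delta(\mu;c)\circ T^k_\ast(\Phi;x)\bigr).\]
The inner truncation can be dropped: a monomial of degree \(>k\) in the \(w_i\), composed with polynomials without constant term, only produces degree \(>k\). Hence the right side equals
\[\pi_{\le k}\Bigl(\prod_i\bigl(f_i(x)+T^k_\ast(f_i;x)\bigr)-\prod_i f_i(x)\Bigr).\]
Adding back the constant \(\prod_i f_i(x)\) gives part 1.

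For part 2, I apply \(\Pol_k\) to part 1, mirroring the proof of Theorem \ref{thm:fdb-part}. Write \(T^k(f_i;x;v)=\sum_{m}\frac1{m!}A^i_m[v^{\tensor m}]\) with \(A^i_0=f_i(x)\) and \(A^i_m=D^m(f_i;x)\). The degree-\(k\) part of the product is
\[\sum_{m_1+\dots+m_n=k}\prod_i\frac1{m_i!}A^i_{m_i}[v^{\tensor m_i}].\]
Each summand has the shape of Lemma \ref{lem:pol-comp}, with \(B\) the multiplication map \(\IR^n\to\IR\). The lemma is stated for \(m_i\ge1\) implicitly, but its proof works verbatim with \(m_i=0\) and zero-linear forms as constants. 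It gives a sum over ordered decompositions \((B_1,\dots,B_n)\opart[k]\) with \(|B_i|=m_i\). Summing over all \((m_i)\) removes the size constraint and yields part 2. Here \(\Pol_k\) of \(T^k\) is \(D^k(f_1\cdots f_n;x)\), since the constant term does not affect degree \(k\).

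For part 3, take \(E=\IR^d\) and extract the coefficient of \(v^\nu\) from part 1. By definition, \(T^k(f_i;x;v)=\sum_{|\mu|\le k}\frac{v^\mu}{\mu!}\del^\mu f_i(x)\), where the \(\mu=0\) term is \(f_i(x)\). Multiplying out, the coefficient of \(v^\nu\) with \(|\nu|=k\) is the sum over \(\nu_1+\dots+\nu_n=\nu\) of \(\prod_i\del^{\nu_i}f_i(x)/\nu_i!\). The truncation \(\pi_{\le k}\) does not touch degree-\(k\) monomials.

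The main obstacle is the hypothesis bookkeeping in part 1. One point is justifying that Theorem \ref{thm:fdb-comp} applies under pointwise differentiability, which goes through the Remark together with the identification \(T^k_\ast(\Phi)=(T^k_\ast(f_i))_i\). The other is checking that the extra truncation of \(\mu\)'s expansion is harmless. Everything else is the same combinatorics already carried out in Lemma \ref{lem:pol-comp}.
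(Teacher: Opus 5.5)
Your proposal is correct and follows the paper's proof essentially step for step. You write the product as $\mu\circ\Phi$ with $\mu(y)=y_1\cdots y_n$, apply Theorem~\ref{thm:fdb-comp} for the Taylor form, polarize with Lemma~\ref{lem:pol-comp} (taking $B$ to be multiplication) for the ordered-decomposition form, and extract the $v^\nu$ coefficient for the multi-index form. Your extra bookkeeping makes explicit several steps the paper passes over silently: citing the pointwise Remark to match the ``differentiable at $x$'' hypothesis, showing that the inner truncation of $\Delta(\mu;c)$ can be dropped, and allowing $m_i=0$ in Lemma~\ref{lem:pol-comp}.
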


\begin{proof}

For \(k = 0\) both expressions are trivially true. Hence we may assume
\(k \geq 1\).

Ad 1) Set \(\phi = (f_1,\dots,f_n): E \ra \IR^n\), and
\(\psi(y) = \prod_{i=1}^n y_i: \IR^n \ra \IR\), \(c_i = f_i(x)\) so that
\(\psi \circ \phi = f_1\cdots f_n\) and \(c = \phi(x)\).

As \(\psi(y) = \prod_{i=1}^n y_i\) is a polynomial itself, the reduced
Taylor polynomial at \(c\) can be calculated by translation and degree
truncation:
\(T_{\ast}^k(\psi; c; v) = \pi_{\leq k} (\psi(c + v) - \psi(c))\).
Furthermore
\(T_{\ast}^k(\phi; x; v) = (T_{\ast}^k(f_1; x; v), \dots, T_{\ast}^k(f_n; x; v)) \in \mathcal{P}(E,\IR^n)=\mathcal{P}(E)\tensor \IR^n\).

Now Faà di Bruno in composition form (Theorem \ref{thm:fdb-comp}) yields
\(T_{\ast}^k(f_1 \cdots f_n; x; v) =  \pi_{\leq k} (\prod_{i=1}^n (c_i + T_{\ast}^k(f_i; x; v)) - \prod_{i=1}^n c_i)\).
Adding \(\prod_{i=1}^n c_i\) on both sides gives the claimed identity:
\[
      T^k(f_1 \cdots f_n;\, x;\, v) = \pi_{\leq k}\bigl(T^k(f_1;\, x;\, v) \cdots T^k(f_n;\, x;\, v)\bigr).
\]

Ad 2) The decomposition formula is derived by applying \(\Pol_k\) to
both sides of (1). We hence find: \[
    D(f_1 \cdots f_n; x)
  =
    \Pol_k \prod_{i=1}^n \sum_{m=0}^k \frac{1}{m!} D(f_i; x; \underbrace{v,\dots,v}_{m})
  =
    \Pol_k \ssum{m_1,\dots,m_n \in \IN_0 \\\\ m_1 + \dots + m_n = k} \prod_{i=1}^n  \frac{1}{m_i!} D(f_i; x; \underbrace{v,\dots,v}_{m_i}).
\]

The claim follows by the Polarization Lemma \ref{lem:pol-comp} applied
to \(B[a_1,\dots,a_n] = \prod a_i\).

Ad 3) Expanding the Taylor polynomial in multi-index form
\(T^k(f_i; x; v) = \sum_{0 \leq |\mu| \leq k} \frac{v^\mu}{\mu!} \del^\mu f_i(x)\),
we find: \[
\frac{\del^\nu F(x)}{\nu!} 
= [v^\nu] \prod_{i=1}^n \sum_{0 \leq |\mu| \leq k} \frac{v^\mu}{\mu!} \del^\mu f_i(x) 
= \sum_{\nu_1 + \cdots + \nu_n = \nu} \frac{\del^{\nu_1} f_1(x)}{\nu_1!} \cdots \frac{\del^{\nu_n} f_n(x)}{\nu_n!}.
\]

\end{proof}

\bibliographystyle{alpha-local}
\bibliography{refs}

\begin{thebibliography}{Mal66}

\bibitem[CS96]{CS1996}
Gregory~M. Constantine and Thomas~H. Savits.
\newblock A multivariate {Fa{\`a} di Bruno} formula with applications.
\newblock {\em Transactions of the American Mathematical Society},
  348(2):503--520, 1996.
\newblock URL
  \url{https://www.ams.org/journals/tran/1996-348-02/S0002-9947-96-01501-2/}.

\bibitem[dB55]{FaaDiBruno1855}
F.~Fa{\`a} di~Bruno.
\newblock Sullo sviluppo delle funzioni.
\newblock {\em Annali di Scienze Matematiche e Fisiche}, 6:479--480, 1855.

\bibitem[Din99]{Dineen1999}
Se\'an Dineen.
\newblock {\em Complex Analysis on Infinite Dimensional Spaces}.
\newblock Springer Monographs in Mathematics. Springer, London, 1999.
\newblock URL \url{https://doi.org/10.1007/978-1-4471-0869-6}.

\bibitem[EM03]{EM2003}
Luis~Hern{\'a}ndez Encinas and Juan~Mu{\~n}oz Masqu{\'e}.
\newblock A short proof of the generalized {Fa{\`a} di Bruno's} formula.
\newblock {\em Applied Mathematics Letters}, 16(7):975--979, 2003.
\newblock URL \url{https://doi.org/10.1016/S0893-9659(03)90087-4}.

\bibitem[Lan93]{LangRFA}
Serge Lang.
\newblock {\em Real and Functional Analysis}, volume 142 of {\em Graduate Texts
  in Mathematics}.
\newblock Springer, New York, 3rd edition, 1993.
\newblock URL \url{https://link.springer.com/book/10.1007/978-1-4612-0897-6}.

\bibitem[Lev06]{Levy2006}
Eliahu Levy.
\newblock Why do partitions occur in {Faa di Bruno's} chain rule for higher
  derivatives?, 2006.
\newblock arXiv preprint.
\newblock URL \url{https://arxiv.org/abs/math/0602183}.

\bibitem[Mal66]{malgrange}
Bernard Malgrange.
\newblock {\em Ideals of Differentiable Functions}.
\newblock Oxford University Press, Oxford, 1966.
\newblock Tata Institute of Fundamental Research Studies in Mathematics.
\newblock URL \url{https://archive.org/details/idealsofdifferen0000malg}.

\end{thebibliography}

\end{document}